\theoremstyle{plain}
\newtheorem{theorem}{Theorem}[section]
\newtheorem{cor}[theorem]{Corollary}
\newtheorem{prop}[theorem]{Proposition}
\newtheorem{lemma}[theorem]{Lemma}
\theoremstyle{definition}
\newtheorem{definition}[theorem]{Definition}
\newtheorem{rmk}[theorem]{Remark}
\numberwithin{equation}{section}
\newtheorem*{theoremA*}{Theorem A}
\newtheorem*{theoremB*}{Theorem B}
\newtheorem*{theoremm1*}{Theorem A'}
\newtheorem*{theoremC*}{Theorem C}
\newtheorem*{theoremD*}{Theorem D}
\newtheorem*{theoremE*}{Theorem E}
\newtheorem*{theoremF*}{Theorem F}
\newtheorem*{theoremE2*}{Theorem E2}
\newtheorem*{theoremE3*}{Theorem E3}
\newcommand{\C}{\mathbb{C}}
\newcommand{\R}{\mathbb{R}}
\newcommand{\N}{\mathbb{N}}
\newcommand{\End}{\operatorname{End}}
\newcommand{\Exp}{\operatorname{Exp}}
\newcommand{\Ad}{\operatorname{Ad}}
\newcommand{\ad}{\operatorname{ad}}
\newcommand{\vol}{\operatorname{vol}}
\def\gf{\mathfrak{g}}
\def\hf{\mathfrak{h}}
\def\kf{\mathfrak{k}}
\def\lf{\mathfrak{l}}
\def\pf{\mathfrak{p}}
\def\qf{\mathfrak{q}}
\def\uf{\mathfrak{u}}
\def\vf{\mathfrak{v}}
\def\zf{\mathfrak{z}}
\def\la{\langle}
\def\ra{\rangle}
\def\1{{\bf1}}
\def\U{\mathcal{U}}
\def\oline{\overline}
\title[homogeneous spaces]
{Vanishing at infinity on homogeneous spaces of reductive type}
\subjclass[2000]{22F30, 22E46, 53C35}
\begin{document}
\date{December 7, 2014}

\begin{abstract}
Let $G$ be a real reductive group and $Z=G/H$ a unimodular 
homogeneous $G$ space. The space $Z$ is said to satisfy VAI if all smooth vectors in
the Banach representations $L^{p}(Z)$ vanish at infinity, $1\leq p<\infty$. 
For $H$ connected we show that $Z$ satisfies VAI
if and only if it is of reductive type.
\end{abstract}

\author{Bernhard Kr\"{o}tz, Eitan Sayag and Henrik Schlichtkrull}
\thanks{The second author was partially supported by ISF grant N. 1138/10}
\maketitle
\section{Introduction}

In many applications of harmonic analysis of Lie groups it
is important to study the decay of functions on the group. 
For example for a simple Lie group $G$, the fundamental discovery of Howe and Moore 
(\cite{H-M}, Thm.\ 5.1), that the matrix coefficients of non-trivial irreducible 
unitary representations vanish at infinity, is often seen to play an important role.
In a more general context it is of interest to study matrix coefficients
formed by a smooth vector and a distribution vector. 
If the distribution vector is fixed by some closed subgroup $H$ of $G$, 
these generalized matrix coefficients will be smooth functions
on the quotient manifold $G/H$. This leads to the question which is studied
in the present paper, the decay of smooth functions on homogeneous spaces.
More precisely, we are concerned with the decay of smooth $L^p$-functions
on $G/H$.

Let $G$ be a real Lie group and $H \subset G$ a closed
subgroup. Consider the homogeneous space $Z=G/H$ and assume that it
is unimodular, that is, it carries a $G$-invariant measure
$\mu_{Z}$. Note that such a measure is unique up to a scalar
multiple.

\par For a Banach representation $(\pi, E)$ of $G$ we denote by
$E^{\infty}$ the space of smooth vectors.
In the special case of the left regular representation of
$G$ on $E=L^{p}(Z)$ with $1 \leq p <\infty$,
it follows from the local Sobolev lemma that $E^{\infty}$
is the space of smooth functions on $Z$, all of whose
derivatives belong to $L^{p}(Z)$
(see \cite{Poul}, Thm.~5.1). Let $C^{\infty}_{0}(Z)$ be the space of
smooth functions on $Z$ that vanish at infinity.
Motivated by the decay of eigenfunctions on symmetric spaces
(\cite{RS}), the following definition was taken in \cite{KS}:

\begin{definition}
We say $Z$ has the property {\it VAI} (vanishing at infinity)  if for all $1
\leq p < \infty$ we have $$L^{p}(Z)^{\infty} \subset
C^{\infty}_{0}(Z).$$
\end{definition}

By \cite{Poul} Lemma 5.1, 
$Z=G$ has the VAI property for $G$ unimodular
and $H=\{\1\}$. The main result of \cite{KS} establishes
that all reductive symmetric spaces admit VAI.
On the other hand, it is easy to find examples of homogeneous spaces
without this property. For example, it is clear that a non-compact homogeneous
space with finite volume cannot have VAI.

The main result of this article is
as follows.

\begin{theorem}\label{th=1} Let $G$ be a connected real reductive group
and $H\subset G$ a closed connected subgroup such that $Z=G/H$ is unimodular
and of algebraic type.
Then {\rm VAI} holds for $Z$ if and only if it is of reductive type.
\end{theorem}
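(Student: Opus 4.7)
The statement has two directions. I would handle them separately, the converse being the more subtle one.

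\emph{Reductive type implies VAI.} Suppose $Z$ is of reductive type, so there is a Cartan involution $\theta$ of $\gf$ preserving $\hf$. This gives a polar decomposition $G = K A_Z H$, with $K = G^\theta$ maximal compact and $A_Z$ coming from a maximal abelian subspace of the $\theta$-antifixed part of $\gf/\hf$. I would adapt the symmetric-space proof of \cite{KS}. Given $f \in L^{p}(Z)^\infty$, a local Sobolev lemma dominates $|f(z)|$ by a finite sum of $L^{p}$-norms of derivatives of $f$ over a unit ball about $z$. The exponential volume growth of such balls under the polar parametrization, characteristic of the reductive-type setting, then forces $|f(k a \cdot z_0)| \to 0$ as $a \to \infty$ in $A_Z$, uniformly in $k \in K$, which is exactly $f \in C_{0}^{\infty}(Z)$.

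\emph{VAI implies reductive type.} If $H$ is not reductive, write its Levi decomposition $\hf = \lf \ltimes \uf$ with $\uf \neq 0$ the unipotent radical. The plan is to build $f \in L^{p}(Z)^\infty$ with $f \notin C_{0}^{\infty}(Z)$. The key idea is to select $X \in \gf$ whose adjoint action contracts $\uf$; along the curve $\exp(tX) \cdot z_0$ the transverse geometry of $Z$ collapses, because some $\ad X$-eigenspaces of $\gf$ transverse to $\hf$ are absorbed into $\hf$ as $t \to \infty$. Small neighborhoods of $\exp(tX) \cdot z_0$ in $Z$ thus have stagnant or shrinking volumes. One then assembles $f = \sum_{n} \chi_n$ from smooth bumps $\chi_n$ supported near $\exp(nX) \cdot z_0$, of unit height, shrinking support, and uniformly bounded derivatives, arranged so that $f \in L^{p}(Z)^\infty$ while $f(\exp(nX) \cdot z_0) = 1$ for all $n$, contradicting VAI.

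\emph{Main obstacle.} The converse direction is the crux. The technical difficulties I foresee are: (a) producing the contracting element $X$ under the unimodularity constraint, which restricts the $H$-action on $\gf/\hf$; (b) arranging the bumps $\chi_n$ so that \emph{all} derivatives—not just $\chi_n$ themselves—remain $L^{p}$-integrable after summation; and (c) verifying that $\exp(nX) \cdot z_0 \to \infty$ in $Z$ rather than merely in $G$, i.e., that the chosen orbit is proper modulo $H$. A clean argument very likely proceeds by a structural reduction, handling $\uf$ abelian first and then inducting on nilpotency class, combined with Iwasawa-type bookkeeping to align the contracting direction with the polar structure of $G$.
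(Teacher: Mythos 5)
Your proposal matches the paper's overall strategy—combine a Sobolev estimate with polar structure for the forward direction, and manufacture a direction of transverse volume collapse plus bump functions for the converse—but in both halves what you have left unresolved is most of the work.

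For the forward direction, the causal link you assert (``exponential volume growth $\Rightarrow$ decay of $f$'') is not yet an argument. The mechanism in the paper is Bernstein's \emph{invariant} Sobolev lemma (Lemma \ref{Blemma}): $|f(z)|\le C_B\,{\bf v}_B(z)^{-1/p}\|f\|_{p,k,Bz}$. For this to give decay one needs precisely a uniform \emph{lower} bound ${\bf v}_B(z)\ge c>0$ (Lemma \ref{v>c}), after which the vanishing of $\|f\|_{p,k,Bz}$ as $z\to\infty$ is automatic for $f\in L^p(Z)^\infty$. The proof of the lower bound does exploit the Mostow decomposition and does produce cosh-type growth along $\exp(tX)z_0$, so your intuition points the right way; but the actual content is the uniform lower bound on ${\bf v}_B$, which requires choosing the complement $V_X$ and neighborhoods $U_X$ locally uniformly in $X\in\qf\cap\pf$ and then invoking compactness.

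For the converse, the three items you labeled ``main obstacle'' are exactly the crux, and your sketch does not yet resolve any of them. (a) One cannot simply pick $X$ ``contracting $\uf$.'' The paper first performs several reduction steps—reducing by induction on $\dim\gf$ to the case where $\hf$ is contained in no proper reductive subalgebra, placing $\hf$ inside a maximal parabolic $\pf_0=\nf_0\rtimes\lf_0$ with $\sf\subset\lf_0$, and then showing via unimodularity that $\zf(\lf_0)\not\subset\hf$—and takes $X\in\zf(\lf_0)\setminus\hf$ with $\nf_0\subset\gf_X^+$. The volume collapse is quantified by $\gamma=\trace\ad X|_{\nf_0}-\trace\ad X|_{\nf_1}>0$, where $\nf_1\subsetneq\nf_0$ is an $\ad X$-invariant complement to $\hf$ inside $\hf+\nf_0$; it is the strict inclusion $\nf_1\subsetneq\nf_0$ (coming from $\hf+\nf_0$ failing to be direct) that makes the Jacobian of $\Phi_t$ decay like $e^{t\gamma}$ as $t\to-\infty$ (Lemma \ref{Key lemma}). (b) Your bumps of unit height, shrinking support, and uniformly bounded derivatives cannot coexist: shrinking support forces the derivatives to blow up. The paper instead fixes one function $\psi$ on the linear model $\vf$ and pushes it forward via $\Phi_t$; controlling derivatives of the pushforward then reduces to proving that $\Ad(a_t)\pi_\vf\Ad(a_t)^{-1}$ stays bounded as $t\to-\infty$ (Lemma \ref{M_t}), which needs a separate and rather delicate analysis of which $\ad X$-eigenvectors can appear in $\nf_1$. (c) The reduction you propose (``abelian $\uf$, then induct on nilpotency class'') is not what is used; the paper inducts on $\dim\gf$ through the structural reductions above, and the case $\hf\cap\nf_0=\{0\}$ (Step 6) must be split into subcases and handled either by passing to a Levi quotient or by reducing to the unipotent case treated separately. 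In short, your converse sketch correctly identifies the phenomenon, but the argument that makes it rigorous is essentially the content of Steps 1--9 together with Lemmas \ref{Key lemma} and \ref{M_t}, none of which follows from your outline as written.
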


Here we recall the following definitions, in which $G$ is a real reductive 
group (see \cite{Wallach} for this notion), and for which 
we let $\Ad$ denote the adjoint
representation of $G$ on the Lie algebra $\gf$.

\begin{definition}\label{defi types}
Let $H\subset G$ be a closed connected subgroup.
\begin{enumerate}
\item\label{red type} We say that $H$ is a {\it reductive subgroup} and that
$Z$ is of {\it reductive type}, if
$H$ is real reductive and the representation $\Ad$ of
$H$ on $\gf$ is completely reducible.
\item\label{alg type} We say that $H$ is an {\it algebraic subgroup} and that
$Z$ is of {\it algebraic type} if
$\Ad(H)$ is the connected component of
an algebraic subgroup of $\Ad(G)$.
\end{enumerate}
\end{definition}

In Theorem~\ref{th=1} the implication `{\it only if\/}'
is valid without the assumption of algebraicity, and we do
not know whether `{\it if\/}' is also valid without this assumption.
Note that both (\ref{red type}) and (\ref{alg type}) are
fulfilled when $H$ is semisimple.
Note also that $Z$ is unimodular when it is of reductive type. 

If $Z$ is of reductive and algebraic type and $B\subset G$ is a compact ball, then we show 
in Section \ref{S:sort} (see also \cite{LM}) that 
$$ \inf_{z\in Z} \vol_Z(Bz) >0\, .$$
In view of the invariant Sobolev lemma of Bernstein (see Lemma \ref{Blemma}) this 
readily implies that $Z$ has VAI.

The converse implication is established in
Proposition \ref{non-red}. As a consequence of the proof it is seen
that in the non-reductive case the volume of the above mentioned sets $Bz$
can be made arbitrarily small by letting $z$ tend to infinity in a
suitable direction (see (\ref{limit zero})).

\smallskip{\bf Acknowledgement} We are grateful to an anonymous referee for comments
which have lead to substantial improvements of the paper.

\section{Notation}\label{inv meas}

Throughout $G$ is a connected real reductive group
and $H\subset
G$ is a closed connected subgroup such that $Z:=G/H$ is unimodular.
We write $\mu_Z$ for a fixed $G$-invariant measure and $\vol_Z$ for the 
corresponding volume function.  

\par Let $\gf$ be the Lie algebra of $G$.
We fix a Cartan involution $\theta$ of $G$. 
The derived involution $\gf \to \gf$ will
also be called $\theta$. 
The fixed point set of $\theta$ is a maximal
compact subgroup $K$ of $G$ whose Lie algebra will be denoted $\kf$.
Let $\pf$ denote the $-1$-eigenspace of $\theta$
on $\gf$, then $\gf=\kf\oplus\pf$.
Let $\kappa$ be a non-degenerate invariant symmetric
bilinear form on $\gf$ such that
\begin{equation*}
\kappa|_{\pf}>0,\quad
\kappa|_{\mathfrak{k}}<0,\quad
\mathfrak{k}\perp \pf.
\end{equation*}

Having chosen $\kappa$ we define an inner product on $\gf$ by
$$\la X, Y\ra = -\kappa (\theta(X), Y).$$ We denote by
$\hf$ the Lie algebra of $H$ and by
$\qf$ be its orthogonal complement in $\gf$.

\begin{lemma}\label{exist theta}
The space $Z$ is of reductive type if and only if there exists a 
Cartan involution $\theta$ of $G$ which preserves $H$. With such a choice
we have $[\hf, \qf]\subset \qf$.
\end{lemma}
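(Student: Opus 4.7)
My plan is to handle the two directions of the equivalence separately and, once the ``if'' direction is in place, deduce $[\hf,\qf]\subset\qf$.

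For the easier ``if'' direction, assume $\theta(H)=H$. Then $\theta|_H$ is a Cartan involution of $H$, so $H$ is real reductive. For complete reducibility of $\Ad(H)$ on $\gf$, the key computation is that the adjoint of $\ad(X)$ with respect to $\la\cdot,\cdot\ra$ equals $-\ad(\theta X)$, which follows from the $\ad$-invariance of $\kappa$ and from $\theta$ being a Lie algebra automorphism. Consequently $\Ad(h)^{*}=\Ad(\theta(h)^{-1})$, and since $\theta(H)=H$ the subgroup $\Ad(H)\subset\GL(\gf)$ is stable under taking $\la\cdot,\cdot\ra$-adjoints. For any $\Ad(H)$-invariant subspace $W\subset\gf$ the orthogonal complement $W^\perp$ is then also $\Ad(H)$-invariant, yielding complete reducibility.

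For the bracket identity, I would observe that $\theta$ is an isometry of $\la\cdot,\cdot\ra$ that preserves $\hf$, hence also preserves $\qf=\hf^{\perp}$, and both $\hf$ and $\qf$ split compatibly with $\gf=\kf\oplus\pf$. Because $\la\cdot,\cdot\ra=\kappa$ on $\pf$ and $\la\cdot,\cdot\ra=-\kappa$ on $\kf$, with $\kf\perp_{\kappa}\pf$, the $\kappa$-orthogonal complement of $\hf$ coincides with $\qf$. Then for $X\in\hf$, $Y\in\qf$, and any $Z\in\hf$, the $\ad$-invariance of $\kappa$ gives
\[
\kappa([X,Y],Z)=-\kappa(Y,[X,Z])=0,
\]
since $[X,Z]\in\hf$ and $\qf$ is $\kappa$-orthogonal to $\hf$; hence $[X,Y]\in\qf$.

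For the harder ``only if'' direction, assume $H$ is reductive and $\Ad(H)$ acts completely reducibly on $\gf$. Fix any Cartan involution $\theta_{0}$ of $G$. By Mostow's classical theorem on reductive subgroups of a real reductive group (see, e.g., \cite{Wallach}), there exists $g\in G$ with $gHg^{-1}$ stable under $\theta_{0}$. The involution $\theta\colon G\to G$ defined by $\theta(x):=g^{-1}\theta_{0}(gxg^{-1})g$ then preserves $H$ by construction, and it is a Cartan involution because its differential is $\Ad(g)^{-1}\circ d\theta_{0}\circ\Ad(g)$, which transports the positive-definite form associated with $\theta_{0}$ into a positive-definite form associated with $\theta$. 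The main obstacle is this converse step: producing a Cartan involution of $G$ preserving $H$ rests on Mostow's conjugacy theorem, and this is precisely where the complete reducibility of $\Ad(H)$ is used essentially. All remaining steps reduce to routine manipulations with $\kappa$ and with the adjoint operation on $\gf$.
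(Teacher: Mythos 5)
Your proposal is correct and essentially expands what the paper simply cites (Helgason Exercise VI A8 / Wolf Thm.\ 12.1.4) without writing out: the ``if'' direction via $\ad(X)^{*}=-\ad(\theta X)$ and the orthogonal-complement argument, the ``only if'' direction via Mostow's conjugacy theorem, and the bracket inclusion via $\ad$-invariance of $\kappa$. One small imprecision: the line ``$\theta|_H$ is a Cartan involution of $H$, so $H$ is real reductive'' has the logic in the wrong order, since one must first know $H$ is reductive before speaking of a Cartan involution of it; the clean route is to note that $\theta$-stability gives $\hf=(\hf\cap\kf)\oplus(\hf\cap\pf)$ and then apply your orthogonal-complement argument to $\ad_\hf$ on $\hf$ itself to conclude $\hf$ is reductive as a Lie algebra, after which $\theta|_H$ is indeed a Cartan involution of $H$.
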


\begin{proof}
See \cite{Helgason} Exercise~VI~A8 or \cite{Wolf} Thm.~12.1.4.
The last statement follows easily.
\end{proof}

\begin{rmk}\label{remark symmetric}
Let $Z$ be of reductive type and choose $\theta$ and $\kappa$ as above.
Then  $[\qf,\qf] \subset \hf$ if and
only if the pair $(\gf,\hf)$ is symmetric, that is, if and only if
$$\hf=\{X\in\gf\mid \sigma(X)=X\}$$ for an involution $\sigma$ of $\gf$.
When $\gf$ is semisimple it then follows that
$$\qf=\{X\in\gf\mid \sigma(X)=-X\}.$$
\end{rmk}

\section{VAI versus volume growth}

For a compact set $B\subset G$ we shall consider the volume function
$$F_B: G \to \R_{\ge 0} , \ \ g\mapsto {\rm vol}_Z(Bg\cdot z_0)\, .$$
For that we recall some results from \cite{B}.
By a {\it ball} in $G$ we will understand a compact symmetric neighborhood of $\1$. 
A continuous function $w:G \to \R_+$ is called a {\it weight} provided that for all balls 
$B\subset G$ there exists a
constant $C_B>0$ such that $w(xg) \leq Cw(g)$ holds for all $x\in B$ and $g\in G$ 
(see \cite{B}). 
Two weights  $G\to \R_+$ are called {\it comparable} if their mutual ratio
is bounded from above and below by positive constants.

Let $Z(G)$ denote the center of $G$.

\begin{lemma} \label{w-functions}
Fix a ball $B\subset G$. Then
\begin{enumerate} 
\item $F_B$ is a weight.
\item If $B'\subset G$ is another ball, then $F_B$ is comparable to $F_B'$. 
\item  $F_B$ factors to a continuous 
function on $\Ad(G) \simeq G/Z(G)$.
\end{enumerate}
\end{lemma}

\begin{proof} The last statement is easy. For the others,
see \cite{B} p.~683, Lemma-Definition. In the proof
it is shown that $m_Z:=F_B^{-1}\mu_Z$ is a so-called standard measure.
\end{proof}

\par Let $1\leq p < \infty$. 
For every $k\in \N$ we let $\|\cdot\|_{p, k}$ be a $k$-th Sobolev norm of $\|\cdot\|_p$, the $L^p$-norm on 
$L^p(Z)$ (see \cite{BK}, Section 2).  Note that the collection $\{\|\cdot\|_{p, k}: k\in \N\}$ determines the Fr\'echet topology on $L^p(Z)^\infty$.

For a subset $\Omega\subset Z$ we write $\|\cdot \|_{p, k, \Omega}$ for 
the semi-norm on $L^p(Z)^\infty$ which is 
obtained by integrating the derivatives over  $\Omega$.

In this context we recall the invariant Sobolev lemma of Bernstein:

\begin{lemma}\label{Blemma}
Fix $k> \frac{\dim G}{p}$. 
Then  for every ball $B$ 
there is a constant $C_B>0$ such that 
\begin{equation*} |f(z)| \leq C_B {\rm vol}_Z(Bz)^{-\frac{1}{p}}
\|f\|_{p, k, Bz} \qquad (z\in Z)
\end{equation*}
for all smooth functions $f$ on $Z$. 
\end{lemma}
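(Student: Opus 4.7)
The plan is to reduce this invariant Sobolev estimate to the classical Sobolev embedding on the Lie group $G$, combined with the fiber integration formula for $Z=G/H$. For fixed $z\in Z$, consider the pullback $F_z(g):=f(gz)$ on $G$, so that $F_z(\1)=f(z)$. A short direct computation shows that the right-invariant derivative satisfies $R_X F_z(g) = (\tilde X f)(gz)$, where $\tilde X$ is the vector field on $Z$ induced by $X\in\gf$ via the derived action; by induction, $R^\alpha F_z(g) = (\tilde X^\alpha f)(gz)$ for any multi-index $\alpha$. Classical Sobolev embedding on $G$, applied on a fixed small compact symmetric neighborhood $V\subset B$ of $\1$, then gives for $k>\dim G/p$
\[
|f(z)|^p \;\le\; C_0\sum_{|\alpha|\le k}\int_V |(\tilde X^\alpha f)(gz)|^p\,dg.
\]

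Next I would recast this as an integral on $Z$ via the fiber integration formula $\int_G \phi\,dg = \int_Z\!\int_H \phi(s(w)h)\,dh\,d\mu_Z(w)$, which holds for any measurable section $s:Z\to G$ (available since unimodularity of $Z$ forces both $G$ and $H$ to be unimodular). After the substitution $g\mapsto gg_0^{-1}$, with $g_0\in G$ a lift of $z$, the estimate becomes
\[
|f(z)|^p \;\le\; C_0\sum_{|\alpha|\le k}\int_{Vz}|(\tilde X^\alpha f)(w)|^p\,\omega_V(w,z)\,d\mu_Z(w),
\]
where $\omega_V(w,z):=\int_H \1_V(s(w)h g_0^{-1})\,dh$ is supported in $Vz\subset Bz$ with total mass $\int_Z\omega_V(w,z)\,d\mu_Z(w)=\vol_G(V)$, independent of $z$.

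The step I expect to be the main obstacle is establishing the uniform pointwise bound $\omega_V(w,z)\le C_1\,{\bf v}_B(z)^{-1}$ for all $w\in Vz$ and $z\in Z$. Heuristically, the average of $\omega_V(\cdot,z)$ on $Vz$ equals $\vol_G(V)/{\bf v}_V(z)$, comparable to ${\bf v}_B(z)^{-1}$ since ${\bf v}_V\asymp{\bf v}_B$, so what is needed is a Harnack-type comparison saying $\omega_V$ does not exceed its average by more than a bounded factor depending only on $B$. I would prove this by a local analysis: lifts of two points of $Vz$ differ by a group element in a compact set depending only on $B$, and the corresponding fiber volumes are then related by a bounded change of variables expressed via $\Ad$ on suitable subspaces of $\gf$. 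Once this bound is in place, substituting back and extracting $p$-th roots yields $|f(z)|\le C_B\,{\bf v}_B(z)^{-1/p}\|f\|_{p,k,Bz}$, as claimed.
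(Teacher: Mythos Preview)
The paper does not prove this lemma; it is quoted verbatim from Bernstein \cite{B} without argument, so there is no ``paper's own proof'' to compare against.

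Your overall plan---pull $f$ back to $G$, apply the classical Sobolev inequality there, and then push the resulting $G$-integral down to $Z$ by Weil's fiber-integration formula---is the standard route and is correct. You have also correctly located the crux of the matter: the pointwise bound $\omega_V(w,z)\le C_1\,{\bf v}_B(z)^{-1}$ on the fiber weight.

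However, your sketch for this key step (``fiber volumes are related by a bounded change of variables expressed via $\Ad$'') does not work as written. For $w_i=v_iz\in Vz$ the relevant sets are $g_0^{-1}v_i^{-1}Vg_0\cap H$, and these differ from one another by left translation by $g_0^{-1}v_1^{-1}v_2\,g_0$; this element lies in no fixed compact set as $g_0$ ranges over $G$, so no uniform Jacobian comparison via $\Ad$ is available.

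The gap can be closed cleanly using nothing more than the averaging identity you already wrote down. Write $m_B(z):=\mu_H(g_0^{-1}Bg_0\cap H)$. For $w=vz\in Vz$ one has $\omega_V(w,z)=\mu_H(g_0^{-1}v^{-1}Vg_0\cap H)\le m_{V^2}(z)\le m_B(z)$ provided $V^2\subset B$. On the other hand, set $B_1:=B^3$; for every $b\in B$ one has $b^{-1}B_1\supset B$, hence $\omega_{B_1}(bz,z)\ge m_B(z)$ for all $bz\in Bz$. Integrating the identity $\int_Z\omega_{B_1}(\cdot,z)\,d\mu_Z=\vol_G(B_1)$ over its support therefore gives
\[
\vol_G(B_1)\;\ge\;\int_{Bz}\omega_{B_1}(\cdot,z)\,d\mu_Z\;\ge\;m_B(z)\,{\bf v}_B(z),
\]
so that $\omega_V(w,z)\le m_B(z)\le \vol_G(B^3)/{\bf v}_B(z)$. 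This is exactly the bound you need, with $C_1=\vol_G(B^3)$; the rest of your argument then goes through verbatim.
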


\begin{proof} See \cite{B}, ``Key lemma'' on p. 686, and note that $m_Z:=F_B^{-1}\mu_Z$ is a standard measure.
The cited lemma has $p=2$, but its proof is valid for $1\leq p < \infty$ as well.
\end{proof}

For $v\in\U(\gf)$ and $f \in L^{p}(Z)^{\infty}$,  as $L_vf$
belongs to $L^p(Z)$, its norm over $Bz$ will be arbitrarily 
small for $z$ outside a sufficiently large compact set. Hence,  for $f\in L^p(Z)^\infty$ with $1\leq p<\infty$  
we obtain that 
$$ \lim_{z\to \infty} \|f\|_{p,k, Bz}= 0\, .$$
Hence we have shown that:

\begin{prop} \label{VAI-vol}
If $\inf_{g\in G} F_B(g)>0$ for some ball $B$,
then ${\rm VAI}$ holds.
\end{prop}

When $\vf\subset\gf$ is a complementary subspace to $\hf$, 
$$\gf= \vf\oplus \hf,$$
we let $\pi_\vf$ 
denote the projection $\gf\to\vf$ along $\hf$, and accordingly identify
$\vf\simeq \gf/\hf$ with the tangent space $T_{z_0}Z$ of $Z$ at $z_0$. 
Given $g\in G$ we further note that the differential of the left multiplication
$\tau_g: Z\to Z$ by $g$ provides an isomorphism 
\begin{equation}\label{tg sp}
d\tau_g: T_{z_0}Z=\vf \xrightarrow{\sim} T_{g\cdot z_0}Z.
\end{equation}

\section{Algebraic lower bound of the volume function}
We know from
Lemma \ref{w-functions} that $F_B$ factors 
through the adjoint representation $G\to \Ad(G)$.
Since $G$ is real reductive, the induced map from
$G/H$ to $\Ad(G)/\Ad(H)$ 
is a finite covering, hence preserves the invariant measure 
up to normalization. It follows that the factored map 
on $\Ad(G)$ agrees with the corresponding map 
$F_{\Ad(B)}$ for $\Ad(G)/\Ad(H)$. 
In order to study $F_B$ we may hence assume that 
$G$ is adjoint. In particular, we can assume it is an 
algebraic group. 

For the following lemma we assume (in addition to $G$ being real reductive)
that $G/H$ is 
{\it real algebraic}. By this we mean
that $G$ and $H$ are the connected components
of the real points of a pair
$G_\C\supset H_\C$ of complex algebraic groups, and thus
$$Z=G/H\subset Z_\C:= G_\C/H_\C.$$

\begin{lemma}\label{algebraic bound}
Assume $G/H$ real algebraic and let $B\subset G$ be a ball.
Then there exists a left $K$-invariant and right $H$-invariant
algebraic function $F$
on $Z$ such that $F(\1)>0$ and
\begin{equation}\label{F<FB}
0\leq F(g)\le F_B(g)^2
\end{equation}
for all $g\in G$.
\end{lemma}

\begin{proof} We need a few geometric preparations.  
Let $\vf\subset \gf$ be a vector complement with a basis
$Y_1, \ldots, Y_n$ consisting of semisimple elements. 
We define a map $\Exp:\vf \to G$ by 
$$\Exp(\sum_{j=1}^n t_j Y_j):=\exp(t_1 Y_1) \cdot \ldots\cdot \exp(t_n Y_n)\, ,$$
and for $g\in G$ we then consider the smooth map 
$$\Phi_g: \vf \to Z, \ \ Y\mapsto \Exp(Y)g\cdot z_0\, .$$
Set $y_i:=\exp(tY_i)$. 

If for each $Y\in\vf$ 
we identify $T_{\Phi_g(Y)}Z$ with $\vf$ as in (\ref{tg sp}), we see 
that the differential of $\Phi_g$ at $Y$ is given by 
\begin{equation}\label{dPhi}
d\Phi_g(Y) (Y') = 
\pi_\vf\big(\Ad(g)^{-1} 
\sum_{j=1}^n t_j' \Ad(y_{j+1}\cdot \ldots\cdot y_{n})^{-1}Y_j\big)
\end{equation}
for  $Y'=\sum_{j=1}^n t_j' Y_j$.
In particular $\Phi_\1$ defines a local diffeomorphism at $Y=0$. 
We are concerned with the cardinality of the fibers
$\Phi_g^{-1}(z)\subset \vf$ at generic elements $z\in Z$
and for generic $g\in G$.

\begin{lemma} There exists $N\in \N$ such that the generic fibers
of $\Phi_g$ are bounded by $N$ for generic elements $g\in G$. 
\end{lemma}

\begin{proof} We recall the following result from algebraic geometry 
(see \cite{G}, Prop.~15.5.1(i)): Let $Z_1, Z_2, Z_3$ be complex 
irreducible algebraic varieties with $\dim Z_1 =\dim Z_3$ and further 
$$f: Z_1 \times Z_2 \to Z_3$$
be an algebraic map, such that for one $z_2'\in Z_2$ the map 
$f(\cdot, z_2')$ is dominant. Then there exists an $N\in \N$ such that 
the generic fibers of $f(\cdot,  z_2)$ are bounded by $N$ for all
generic  $z_2\in Z_2$.  

\par We apply this to 
$Z_1=\exp(\C Y_1) \times\ldots\times \exp(\C Y_n)$, 
$Z_2=G_\C$, $Z_3=Z_\C$, and the map 
$$f((z_1, \ldots, z_n), g):= z_1\cdot\ldots\cdot z_n g\cdot z_0\, .$$
Observe that $f$ is defined over $\R$. The assertion follows. 
\end{proof}

We can now complete the proof of Lemma \ref{algebraic bound}.
Fix an open compact neighborhood $V\subset \vf$ of zero
with $\Exp(V)\subset B$ and for which $\Phi_\1$ restricts
to a diffeomorphism onto its image. Set 
$\phi_g:=\Phi_g|_V$. 
It follows from our formula (\ref{dPhi}) for the differential, that 
the Jacobian 
$$J_g(Y):= \det d\phi_g(Y) \qquad (g\in G, Y\in V)$$
depends algebraically on $g$. 
If $\Omega$ is the $G$-invariant differential form of $Z$ we let 
$\omega_g$ be its pull-back to $V$ and define a function 
$$f_V(g):=\int_V \omega_g\qquad (g\in G)\, .$$
Then it is clear that $f_V$ is a polynomial function on $G$ with 
$f_V(\1)>0$. 

It follows from the uniform fiber bound that 
$$ |f_V(g)|\leq  N \cdot F_B(g)  $$
for $g\in G$ generic, and hence for all $g\in G$
by continuity.
Hence $F_V:=f_V^2/N^2$ is a non-negative 
algebraic function which is dominated by $F_B^2$.

\par It follows from Lemma \ref{w-functions} that 
we can assume in addition that the ball $B$ is right 
$K$-invariant, that is,
\begin{equation}\label{ballK} BK=B\, .\end{equation}
Then the volume function $F_B$ is left $K$-invariant, and hence 
the average of $F_V$ over $K$ from the left is 
algebraic and satisfies (\ref{F<FB}).
\end{proof}

\begin{cor} \label{matrix coefficient} Let $G/H$ be of
algebraic type (see Definition \ref{defi types}(\ref{alg type}))
and let $B\subset G$ be a ball.
There is a finite dimensional representation $(\pi, W)$ of $G$ with  a cyclic $K$-fixed vector
$v_K\in W$ and a cyclic $H$-fixed vector $v_H\in W$ such that 
$\la v_H, v_K\ra> 0$ and 
\begin{equation} \label{mc} 0\le\la \pi(g)v_H, v_K\ra
\le F_B(g)^2 \qquad (g\in G)\, .\end{equation}
Here $\la\cdot,\cdot\ra$ is an inner product on $W$ which is $\theta$-covariant:
$\la \pi(g)v, w\ra = \la v, \pi(\theta(g))^{-1} w\ra$ for $g\in G$ and $v,w\in W$.
\end{cor}

\begin{proof} It follows from the remark in the beginning
of this section that we may assume $G/H$ is real algebraic. 
With the right action
the algebraic function $F$ of Lemma \ref{algebraic bound}
generates a finite dimensional representation $W$ in which $v_H=F$
is $H$-fixed and cyclic. Moreover, evaluation at $\1$ is a
$K$-fixed cyclic vector for the dual representation. Finally, the inner product $\la\cdot,\cdot\ra$ 
exists since $\theta$ is a Cartan involution, and with that we obtain $v_K$ and 
$F(g)=\la\pi(g)v_H, v_K\ra$.
\end{proof}

\section{Reductive Spaces are {\rm VAI}}\label{S:sort}
For $G$ and $H$ both semisimple 
it was shown with analytic methods in \cite{LM} that 
$\inf_{g\in G} F_B(g)>0$.
In this section we give a geometric proof,
which is valid more generally
for spaces which are of both reductive and algebraic type. 
Combined with Proposition   \ref{VAI-vol} 
this completes the proof of the implication `if' of Theorem \ref{th=1}.

\begin{lemma}\label{v>c}
Let $Z=G/H$ be of reductive and algebraic
type and let $B\subset G$ be a ball. Then there
exists a constant $c>0$ such that
\begin{equation} \label{v-bound} 
\vol_Z(Bz) \ge c\end{equation}
for all $z\in Z$. 
\end{lemma}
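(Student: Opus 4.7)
The plan is to reduce the lower bound on $\vol_Z(Bz)$ to a uniform upper bound on the $H$-Haar measure of certain subsets of $H$, and to establish the latter via a Jacobian estimate that exploits the reductive-type structure.

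Using the disintegration $\int_G f\,dg=\int_{G/H}\int_H f(gh)\,dh\,d\mu_Z(gH)$ applied to $f=\mathbf{1}_{Bg_0}$ (where $z=g_0H$) yields
\[
\vol_G(B)=\int_{Bz}\vol_H\bigl((g')^{-1}Bg_0\cap H\bigr)\,d\mu_Z(g'H).
\]
For $g'H\in Bz$ one has $g'=bg_0h_0$, and the symmetry of $B$ together with left $H$-invariance of $\vol_H$ bounds the integrand pointwise by $M(g_0):=\vol_H(g_0^{-1}B^2g_0\cap H)$; hence $\vol_Z(Bz)\ge\vol_G(B)/M(g_0)$, reducing the lemma to proving $\sup_{g_0\in G}M(g_0)<\infty$. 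By Lemma \ref{exist theta} I fix $\theta$ preserving $H$, so $[\hf,\qf]\subseteq\qf$ and $\hf\perp\qf$ for $\la\cdot,\cdot\ra$. Using the Mostow-type polar decomposition $G=K\exp(\qf\cap\pf)H$ for reductive-type spaces---and enlarging $B$ to be $K$-bi-invariant at the cost of the eventual constant---the problem reduces to bounding $\sup_{Y\in\qf\cap\pf}M(\exp Y)<\infty$.

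For $Y\in\qf\cap\pf$, parameterize $h\in H$ near the identity by $h=\exp(X)$, $X\in\hf$; the relation $\exp(Y)h\exp(-Y)=\exp(\Ad(\exp Y)X)$ shows that after a standard reduction to small exponential pieces of $B^2$, $M(\exp Y)$ is controlled by $1/J(Y)$, where $J(Y)$ is the Jacobian of the linear injection $\Phi_Y\colon\hf\to\gf$, $X\mapsto\Ad(\exp Y)X$. Since $Y\in\pf$ makes $\Ad(\exp Y)$ self-adjoint for $\la\cdot,\cdot\ra$, we have $J(Y)^2=\det(\pr_{\hf}\Ad(\exp 2Y)|_{\hf})$. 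The orthogonality $\la[Y,X],X\ra=0$ for $X\in\hf$---valid because $[Y,X]\in\qf$ (using $[\qf,\hf]\subseteq\qf$) and $\hf\perp\qf$---says that $\sum_\lambda\lambda|X_\lambda|^2=0$ in the $\ad Y$-eigendecomposition $X=\sum_\lambda X_\lambda$. The convex inequality $e^t\ge 1+t$ then gives
\[
\la\Ad(\exp 2Y)X,X\ra=\sum_\lambda e^{2\lambda}|X_\lambda|^2\ge\sum_\lambda(1+2\lambda)|X_\lambda|^2=|X|^2,
\]
so $\pr_{\hf}\Ad(\exp 2Y)|_{\hf}\ge I$ as self-adjoint operators on $\hf$, and hence $J(Y)\ge 1$.

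The hard part is this Jacobian estimate: it uses the reductive-type structure in two essential ways---$\theta$-stability of $H$ provides $Y\in\pf$ and hence the self-adjointness of $\Ad(\exp Y)$, while the bracket relation $[\hf,\qf]\subseteq\qf$ gives the crucial orthogonality $\la[Y,X],X\ra=0$. Both ingredients fail for non-reductive $H$, and in the converse direction (Proposition \ref{non-red}) one exhibits such $H$ for which $\vol_Z(Bz)$ can be made exponentially small, reflecting the breakdown of exactly this Jacobian bound.
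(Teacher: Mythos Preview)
Your route is genuinely different from the paper's. The paper works directly on $Z$: for each unit $X\in\qf\cap\pf$ it picks an $\ad X$-stable complement $V_X$ to $\hf$, shows ${\bf v}_B(\exp(tX)z_0)\ge c_Xe^{\lambda_X t}$ by a box-scaling argument, and then uses the $\theta$-symmetry ${\bf v}_B(z)={\bf v}_B(\theta(z))$ to upgrade this one-sided estimate to ${\bf v}_B\ge c_X\cosh(\lambda_X t)\ge c_X$; local uniformity of $c_X$ and compactness of the unit sphere finish the proof. You instead invoke Weil's formula to convert the lower bound on ${\bf v}_B$ into an upper bound on $M(g_0)=\vol_H(g_0^{-1}B^2g_0\cap H)$, and your key inequality $\langle\Ad(\exp 2Y)X,X\rangle\ge|X|^2$ for $X\in\hf$, $Y\in\qf\cap\pf$ --- coming from $[\hf,\qf]\subset\qf$ and convexity of $e^t$ --- is a clean replacement for the paper's $\theta$-symmetry trick. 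This inequality says every singular value of $\Ad(\exp Y)|_\hf$ is $\ge 1$, which is exactly the right infinitesimal fact.

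However, the passage ``after a standard reduction to small exponential pieces of $B^2$, $M(\exp Y)$ is controlled by $1/J(Y)$'' hides a real gap. Your Jacobian bound controls the Lebesgue measure of $\{X\in\hf:\Ad(\exp Y)X\in U\}$ for a \emph{fixed} bounded $U\subset\gf$, but the set $\exp(-Y)B^2\exp(Y)\cap H$ is not a priori contained in any fixed exponential chart of $H$: as $Y\to\infty$ the conjugate $\exp(-Y)B^2\exp(Y)$ can have arbitrarily large diameter in $G$, and an element $h$ in its intersection with $H$ satisfies $h=\exp(\Ad(\exp(-Y))W)$ for some $W\in\log(B^2)$, yet there is no reason $\Ad(\exp(-Y))W\in\hf$, nor that $h$ lies in $\exp$ of a fixed ball in $\hf$. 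Equivalently, writing $M(\exp Y)=J(Y)^{-1}\cdot\vol^{\mathrm{Riem}}_{c_Y(H)}(c_Y(H)\cap B^2)$ with $c_Y(h)=\exp(Y)h\exp(-Y)$, you have bounded the first factor but said nothing about the induced $(\dim H)$-volume of the varying submanifold $c_Y(H)$ inside $B^2$. This second factor is not obviously bounded, and ``small exponential pieces of $B^2$'' does not help since the pieces, once conjugated, are centred at uncontrolled points of $G$. The gap is probably repairable --- for instance by combining your singular-value bound with the $H$-polar decomposition $h=k_H\exp(X_0)$, $k_H\in K\cap H$, $X_0\in\pf\cap\hf$, and showing that $\exp(Y)h\exp(-Y)\in B^2$ forces $|X_0|$ bounded --- but as written the reduction is not justified.
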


\begin{proof}
By Lemma \ref{w-functions}
it is no loss of generality to request in addition to (\ref{ballK})
that $B$ has the property:
\begin{equation}\label{thetaB}\theta (B) = B.\end{equation}

As  $Z$ is of reductive type we can apply Lemma \ref{exist theta}
and arrange that $H$ is
$\theta$-stable. 
Then $\theta$ induces an automorphism on $Z$ which is measure preserving. 
Hence (\ref{thetaB}) implies that 
\begin{equation}\label{vBtheta}
F_B(g) = F_B(\theta(g)) \qquad (g\in G).
\end{equation}
 
\par Let $F$ be a matrix coefficient as in
Corollary \ref{matrix coefficient} such that 
$$0\le F(g)\leq F_B(g)^2$$ for all $g\in G$.
Because of (\ref{vBtheta}) we also have
$$0\le F(\theta(g))\leq F_B(g)^2$$ 
for all $g\in G$. Hence it suffices to show
$$\inf_{g\in G} [F(g)+F(\theta(g))]>0.$$

\par We recall the following fact from convex geometry. 
Let $(W_\R, \la \cdot, \cdot\ra)$ 
be an Euclidean vector space and  $C\subset W_\R$ a regular cone, i.e.
$C$ is convex, closed, contains no lines, and has non-empty interior. Let $C^\star\subset W$ be 
the dual cone to $C$. Then $C^\star$ is regular as well. Fix an element $v^\star$ in the interior 
of $C^\star$. Then there exists a constant $c>0$ such that 
\begin{equation} \label{convexity bound}  (\forall v\in C) \ \  \la v^\star, v\ra \geq c \sqrt{\la v, v\ra}\, .\end{equation} 

\par We wish to apply this fact to $F$ and  
the representation $W$ in Corollary \ref{matrix coefficient}.
Note that $W$ has 
a real structure $W_\R$ with $v_K, v_H\in W_\R$.  
As these vectors are cyclic, the closed convex cones
$C_H$ and $C_K$, generated by the $G$-orbit through the rays $\R^+ v_H$ and $\R^+v_K$, respectively,
both have non-empty interior.  As $F$ is non-negative 
we clearly have $C_H \subset C_K^\star$ and 
$C_K \subset C_H^\star$. As $C_K$ is regular, we conclude that $C_H$ is regular as well.  Further 
$v_K$ lies in the interior of $C_K$  (see \cite{HO}, Lemma 2.1.15) and with (\ref{mc}) and 
(\ref{convexity bound})
we obtain a constant $c>0$ such that
\begin{equation} \label{shown1} F(g)\ge c \| \pi(g) v_H\|\end{equation}  
for all $g\in G$. 

For every $X\in \pf$ we let $v_H =v_H^+ +v_H^0+ v_H^-$ 
be the decomposition into positive, fixed and negative eigenvectors for $X$. 
We obtain for $g=\exp X$ that 
$$\|\pi(g)v_H\|^2\geq \|v_H^0\|^2 + \|v_H^+\|^2 $$
and
$$\|\pi(\theta(g))v_H\|^2\geq \|v_H^0\|^2 + \|v_H^-\|^2 .$$
Hence by (\ref{shown1}) 
$$ F(g)+F(\theta(g))\ge c \left(\|v_H^0\|^2 + 
\|v_H^+\|^2+\|v_H^-\|^2\right)^{\frac12}= c\|v_H\|$$
and the lemma is proved.
\end{proof}

\begin{rmk} If $Z=G/H$ is a reductive real spherical space
(in particular, a reductive symmetric space), an upper volume bound 
of exponential  type is also valid. See \cite{vg}.
\end{rmk}

\begin{rmk} For a semisimple symmetric space the wave front lemma,
Theorem 3.1 of \cite{EM}, shows that there exists an open neighborhood 
$V$ of $z_0$, such that $B z$ contains a $G$-translate of $V$ for all $z\in Z$.
This implies (\ref{v-bound}) for this case. 
\end{rmk}

\section{The differential of exp}\label{differentials}

Let $\vf\subset\gf$ be a complementary subspace to $\hf$, and consider the map 
\begin{equation}\label{Phi}
\Phi_g: \vf \to Z,  \ \ Y \mapsto \exp(Y) g\cdot z_0\, .
\end{equation}
The following formula for its differential is well known.

\begin{lemma}\label{jacobian formula}
The differential of
$\Phi_g$ at $Y\in\vf$ is given by
\begin{equation} \label{differential of Phi_X}
d\Phi_g(Y)=d\tau_{\exp(Y)g}\circ\pi_\vf\circ\Ad(g)^{-1}\circ\beta(Y)\circ\iota_\vf
\end{equation}
where
$$\beta(Y)=\frac{\1- e^{-\ad Y}}{\ad Y}\in\End(\gf)$$
for $Y\in\vf$, and  $\iota_\vf: \vf\to\gf$ is the inclusion map.
\end{lemma}

\begin{rmk} \label{remark jacobian formula} In fact
we shall apply the lemma in a more general situation where the complementary subspace
$\vf$ splits in a direct sum of subspaces.
For example if $\vf=\vf_1\oplus\vf_2$ we can replace (\ref{Phi}) 
by
$$\Phi_g: \vf_1\times\vf_2 \to Z,  \ \ (Y_1,Y_2) \mapsto \exp(Y_1)\exp(Y_2) g \cdot z_0\, .$$
Similar to (\ref{differential of Phi_X}) we find in this case for $W=(W_1,W_2)\in\vf$ that
$$d\Phi_g(Y)(W)=d\tau_{\exp(Y_1)\exp(Y_2)g}\pi_\vf \Ad(g)^{-1} (S_{Y,W}),$$
where 
$$S_{Y,W}:=\Ad(\exp(Y_2)^{-1})\beta(Y_1)(W_1) +
\beta(Y_2)(W_2)\in\gf.$$
\end{rmk}

\section{Non-reductive spaces are not VAI}\label{appendix A}

In this section we prove that VAI does not hold on any
homogeneous space $Z=G/H$ of $G$, which is not of reductive type.
We maintain the assumptions in Section \ref{inv meas}
and establish the following result.

\begin{prop} \label{non-red}
Assume that $Z=G/H$ is unimodular
and not of reductive type. Then for all
$1\leq p< \infty$ there exists an unbounded
function $f\in L^p(Z)^\infty$. In particular, {\rm VAI} does not
hold. \end{prop}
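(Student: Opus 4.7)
The plan is to use non-reductivity of $H$ to find a $\pf$-direction in $Z$ along which the volume function ${\bf v}_B$ decays exponentially, and then to build an unbounded smooth $L^p$ function out of bumps concentrated along the corresponding orbit.

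\textbf{Step 1 (volume shrinkage, the main obstacle).} I would establish the key estimate: there exist $X\in\pf$ and $\lambda>0$ such that
$${\bf v}_B(\exp(tX)\cdot z_0) \le C e^{-\lambda t} \qquad (t\ge 0).$$
By $G$-invariance of $\mu_Z$,
$${\bf v}_B(\exp(tX)\cdot z_0) = \vol_Z\bigl((\exp(-tX)\cdot B\cdot\exp(tX))\cdot z_0\bigr),$$
and for a small ball $B$ this volume is essentially the Lebesgue volume in a complement $V\subset\gf$ to $\hf$ of the projection $\pr_V(e^{-t\ad X}\log B)$. Since $H$ is not reductive, Lemma~\ref{exist theta} says $\hf$ is not $\theta$-stable for any Cartan involution $\theta$, equivalently that the unipotent radical of $H$ is nontrivial. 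One then selects $X\in\pf$ for which the determinant of $\pr_V\circ e^{-t\ad X}|_V$ decays exponentially; the key algebraic point is that the $\ad X$-expansion of certain ``skew'' directions in $\hf$ forces the projected image to collapse.

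\textbf{Step 2 (bumps with controlled derivatives).} Set $z_n:=\exp(nX)\cdot z_0$. I would construct bumps $\chi_n\in C_c^\infty(Z)$ with $\chi_n(z_n)\ge c$, $\supp\chi_n\subset Bz_n$ (so $\vol_Z(\supp\chi_n)\le{\bf v}_B(z_n)$), and $\|L_v\chi_n\|_\infty\le C_v$ uniformly in $n$ for each $v\in\U(\gf)$. These cannot be $G$-translates of a fixed bump (which preserve $L^p$-norm); instead they come from the exponential chart $Y\mapsto\exp(Y)z_n$ on a suitable complement $V_n\subset\gf$ to $\Ad(\exp(nX))\hf$. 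At $z_n$ the chart has Jacobian ${\bf v}_B(z_n)$, so a fixed bump on a unit $V_n$-ball yields $\chi_n$ with $\|L_v\chi_n\|_p\le C_v'{\bf v}_B(z_n)^{1/p}\le C_v'e^{-\lambda n/p}$; uniform boundedness of $\|L_v\chi_n\|_\infty$ in $n$ follows from the compatibility of $V_n$ with the contracting $\ad X$-eigenspaces arranged in Step~1.

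\textbf{Step 3 (assembly).} Along a sparse subsequence (still denoted $z_n$) so the supports $Bz_n$ are pairwise disjoint, define $f:=\sum_n e^{\lambda n/(2p)}\chi_n$. The sum is locally finite, so $f$ is smooth, and for each $v\in\U(\gf)$
$$\|L_v f\|_p^p = \sum_n e^{\lambda n/2}\|L_v\chi_n\|_p^p \le (C_v')^p \sum_n e^{-\lambda n/2} < \infty,$$
hence $f\in L^p(Z)^\infty$. But $f(z_n)\ge c\,e^{\lambda n/(2p)}\to\infty$, so $f$ is unbounded, contradicting VAI. The crux is Step~1; Steps~2 and~3 are routine local Sobolev/bump-function arguments once the exponential volume shrinkage is in hand.
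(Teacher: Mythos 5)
Your three-step outline (volume shrinkage along a one-parameter orbit, bump functions with controlled derivatives, assembly into an unbounded $L^p$-function) correctly identifies the \emph{shape} of the paper's argument, and Steps~2 and~3 are indeed routine once the inputs are in hand. But there is a genuine gap: the inputs to Steps~2 and~3 are precisely the hard part, and your proposal asserts them without proof.

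Specifically, your Step~1 claims the existence of $X\in\pf$ with $\mathbf{v}_B(\exp(tX)z_0)\le Ce^{-\lambda t}$, and attributes it to ``the $\ad X$-expansion of certain `skew' directions in $\hf$ forcing the projected image to collapse.'' This is exactly what needs to be established, and it does not follow by an easy local linear-algebra argument. The paper needs a full induction on $\dim\gf$ with several non-trivial reduction steps: first reducing to the case that $\hf$ is contained in no proper reductive subalgebra, then embedding $\hf$ in a maximal parabolic $\pf_0=\nf_0\rtimes\lf_0$, handling separately the case $\hf\cap\nf_0=0$ (which falls back on the induction via the Levi quotient, or on a Jacobson--Morozov argument for the residual abelian/unipotent case), and only in the remaining case producing a proper $\ad X$-stable complement $\nf_1\subsetneq\nf_0$ inside the nilradical that makes the Jacobian of the chart $\Phi_t$ decay like $e^{t\gamma}$ with $\gamma=\trace(\ad X|_{\nf_0})-\trace(\ad X|_{\nf_1})>0$. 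The element $X$ they use lies in $\zf(\lf_0)$, not simply ``some $X\in\pf$,'' and the volume collapse relies on $\nf_1$ being a \emph{proper} subspace of $\nf_0$, which is not automatic but comes out of the case analysis.

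There is a second unaddressed point in your Step~2: you assert that ``uniform boundedness of $\|L_v\chi_n\|_\infty$ in $n$ follows from the compatibility of $V_n$ with the contracting $\ad X$-eigenspaces.'' This is the content of the paper's Lemma~\ref{M_t}, $\sup_{t<0}\|\Ad(a_t)\pi_\vf\Ad(a_t)^{-1}\|<\infty$, and its proof is delicate: it works only because $\nf_1$ was constructed greedily by taking $\ad X$-eigenvectors of \emph{largest} eigenvalue complementary to $\hf$, which ensures that the $\hf$-component of any eigenvector $Y_\lambda\notin\nf_1$ only contributes eigenvalues $\mu\ge\lambda$ to the correction terms, so that $e^{\mu t}(e^{-\lambda t}-1)$ stays bounded as $t\to -\infty$. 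Without this specific choice of complement the derivative norms $\|L_v\chi_n\|_\infty$ can blow up, destroying the smoothness of the assembled function $f$. Your proposal gives no mechanism enforcing this, so as written it would not compile into a complete proof.
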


\begin{proof} As in Lemma \ref{algebraic bound} the key to the proof is the construction of a 
suitable vector complement $\vf$ to $\hf$ in $\gf$.

Let $\uf_H$ be the largest ideal of $\hf$ which acts by nilpotent morphisms on $\gf$.
As $H$  is not reductive in $G$ we have $\uf_H\neq \{0\}$. Let $L_H<H$ be a Levi-complement 
to $U_H$. According to Borel and Tits (see \cite{BorelTits} or \cite{HU}, Sect.~30.3,  Cor.~A)
we find a parabolic subgroup $Q$ of $G$ with Levi decomposition 
$Q=LU$ such that $L_H \subset L$ and $U_H \subset U$.
Let $\theta$ be a Cartan 
involution of $G$ which fixes $L$ and let $\oline{U}=\theta (U)$. 
We recall that according to Bruhat decomposition,
\begin{equation}\label{Bruhat} \oline{U} \times L \times U \to G, \ \ (\oline u, l , u)\mapsto \oline u l u \end{equation}
is a diffeomorphism onto its Zariski open image. 

Let $X \in \zf(\lf)$ be an element in the center of $\lf$ such that $\ad X|_{\uf}$ has positive spectrum. 
Notice that we cannot have $X\in \hf$, as in that case $\ad X$ would have a positive trace 
on $\hf=\lf_H+\uf_H$, contradicting that $G/H$ is unimodular.
It follows that $a_t\cdot z_0\to\infty$ in $L/L\cap H$ and hence also in $Z$, for $|t|\to\infty$.

\par We now construct a complementary subspace $\uf_X$ to $\uf_H$ as follows.
If $\uf_H=\uf$, then $\uf_X=\{0\}$. Otherwise we choose an
$\ad X$-eigenvector, say $Y_1$, in $\uf\setminus \uf_H$
with largest possible eigenvalue. If $\uf_H+\R Y_1 \subsetneq \uf$
we choose an eigenvector $Y_2\in \uf\setminus (\uf_H+\R Y_1)$
with largest possible eigenvalue. We 
continue this procedure until $Y_1,Y_2,\dots$
span a complementary subspace. This subspace we denote $\uf_X$. 

\par Let $\lf_0=\lf_H^{\perp_\lf}$ denote the orthocomplement of $\lf_H$ in $\lf$. 
Then 
\begin{equation*}
\vf=\oline\uf+\lf_0+\uf_X
\end{equation*}
is an $\ad(X)$-stable complement
to $\hf$ in $\gf$.

Before proceeding we note some important consequences of this construction of $\vf$. 
Firstly it follows that
\begin{equation} \label{exp2}
\uf_X \to U/U_H, \ \ Y\mapsto \exp(Y) U_H\end{equation} 
is a diffeomorphism. This boils down to a general property of graded
nilpotent Lie algebras that will be established in Lemma \ref{grad-lemma}.
Secondly the following lemma holds.

\begin{lemma}\label{M_t} 
With $\uf_X$ and $\vf$ defined as above
we have $\sup_{t<0}(M_t)<\infty$ where
$$M_t:=\sup_{W\in \gf, \|W\|=1}  \|\Ad(a_t) \pi_\vf \Ad(a_t)^{-1} W\|.$$
\end{lemma}

\begin{proof}
For $W\in\vf$ we have
$$\Ad(a_t) \pi_\vf \Ad(a_t)^{-1} W=W,$$ 
and for $W\in\lf_H$ we have
$$\Ad(a_t) \pi_\vf \Ad(a_t)^{-1} W=0.$$
Hence we may assume $W\in\uf_H$. We can write $W$ as
a combination of $\ad X$-eigenvectors $Y_\lambda\in\uf$ with eigenvalues $\lambda$.
Then
$$\Ad(a_t)^{-1} W=\sum e^{-\lambda t}Y_\lambda
.$$
If $Y_\lambda\in\uf_X$ then
$$\Ad(a_t) \pi_\vf e^{-\lambda t}Y_\lambda=Y_\lambda.$$
Finally if $Y_\lambda$ is not in $\uf_X$, then it is the
sum of an element from $\uf_H$
and some eigenvectors $V_\mu\in\uf_X$. 
Moreover, all these $V_\mu$ 
must have eigenvalues $\mu\geq\lambda$, since otherwise $Y_\lambda$ would have been 
preferred before such a $V_\mu$ in the construction of $\uf_X$.
Thus, 
$$\Ad(a_t) \pi_\vf e^{-\lambda t} Y_\lambda=
\sum_{\mu\ge\lambda} e^{(\mu-\lambda) t} V_\mu$$
which stays bounded for $t\to-\infty$. 
\end{proof}

We now continue with the proof of Proposition  \ref{non-red}.
Let $V_0\subset \lf_0$ be an open neighborhood of $0$ such that 
$V_0\to L/L_H, \ Y\mapsto \exp(Y) L_H$ 
is a diffeomorphism onto its  
image.  It follows that the map 
\begin{equation} \label{exp1} V_0 \times U/U_H \to Q/H, \ \ (Y, uU_H)\mapsto \exp(Y)u \cdot z_0\end{equation}
is a diffeomorphism onto its image. 

Combining (\ref{exp1}) and  (\ref{exp2}) with 
(\ref{Bruhat}) we obtain a diffeomorphism 
\begin{align*} \Phi:\, &\oline{\uf} \times V_0 \times \uf_X\to G/H, \\ 
&(Y^-, Y^0, Y^+)\mapsto \exp(Y^-) \exp(Y^0)\exp(Y^+)\cdot z_0
\end{align*}
onto its image.

\par Further we let 
$V^-$ and $V^+$ be open relatively compact convex neighborhoods 
of $0$ in the vector spaces $\oline\uf$ and $\uf_X$. 
Set $V:=V^- \times V^0 \times V^+$. 

For $t\in \R$ we set $a_t:=\exp(tX)$ and consider the map $\Phi_t: V\to G/H$,
$$\Phi_t( Y):=\exp(Y^-) \exp(Y^0)\exp(Y^+) a_t \cdot z_0$$
where $Y=(Y^-, Y^0, Y^+)\in V$.
It follows that $\Phi_t$ is a diffeomorphism onto its open image for all 
$t\in \R$. We need the following property for which we recall 
the identification (\ref{tg sp}) of the tangent spaces of $Z$ with~$\vf$.

\begin{lemma}\label{key lemma}
There exists a
linear map $L(Y):\vf\to \gf$  such that
\begin{equation}\label{L(Y)}
d\Phi_t(Y)=\Ad(a_t)^{-1}(\1_\vf+   \Ad(a_t) \pi_\vf \Ad(a_t)^{-1} L(Y))
\end{equation}
for all $t\le 0$, and such that $\|L(Y)\|\to 0$ for $Y\to 0$.
\end{lemma}

\begin{proof}
Let $Y=(Y^-, Y^0, Y^+)$ and $X=(X^-, X^0, X^+)$ in $\vf$, 
then it follows from Remark \ref{remark jacobian formula} that
$$d\Phi (Y^-, Y^0, Y^+)(X^-, X^0, X^+)=d\tau_{y^-y^0y^+a_t}(z_0)
\circ
\Ad(a_t)^{-1} (S_{Y,X})$$
where $y^-=\exp(Y^-)$ etc, and where $S_{Y,X}\in\gf$ is the element
$$\Ad(y_0y^+)^{-1}\beta(Y^-)(X^-)+
\Ad(y^+)^{-1}\beta(Y^0)(X^0) +
\beta(Y^+)(X^+).$$
Defining $L(Y)$ by $L(Y)(X)=S_{Y,X}-X$ for $X\in\vf$,
we obtain the expression in (\ref{L(Y)}).
It is easily seen that $\|L(Y)\|\to 0$ for $Y\to 0$.
\end{proof}

Let $J_t=|\det d\Phi_t|$. 
By Lemmas \ref{key lemma} and 
\ref{M_t} there exists a
constant $C>0$ such that the following bound holds
for $V$ sufficiently small: 
\begin{equation} \label{Jac-bound}  J_t(Y) \leq C  e^{t\lambda_X} \qquad (t\leq 0, Y\in V)\, 
\end{equation}  
with $\lambda_X=-\operatorname{trace}\ad_X|_{\oline \uf + \uf_X}$. 
Note that $\lambda_X>0$
since $\uf_H$ is non-trivial.

Fix a function $\psi\in C_c^\infty(V)$ 
with $0\leq \psi\leq 1$ and $\psi(0)=1$. 
For all $t\in \R $ define $\chi_t\in C_c^\infty(Z)$ by
$\chi_t (z)=\psi (\Phi_t^{-1}(z)) $
and set
$$\chi:=\sum_{n\in \N} n\chi_{-n}\, .$$
It is clear 
that $\chi\in C^\infty(Z)$ and that
$\chi$ is unbounded. 
We claim that $\chi \in  L^p(Z)^{\infty}$ for all $1\leq p<\infty$.

It follows from the estimate in (\ref{Jac-bound}) that
for all $1\leq p<\infty$ there exists $C>0$ such that
$\|\chi_t\|_p\leq Ce^{t\lambda_X/p}$ 
for all $t\leq 0$.
Hence $$\chi=\sum_{n\in \N} n\chi_{-n}\in L^p(Z)$$ for all $1\le p<\infty$, and 
it only remains to be seen that also the derivatives of $\chi$ belong to
$L^p(Z)$. 

We first show this for first order derivatives.
Let $W\in\gf$  and consider the derivative
$L(W)\chi_t$. At $z=\Phi_t(Y)$
this is given by
$$L(W)\chi_t(z)=d/ds|_{s=0}\, \chi_t(\exp(sW)ya_tz_0)$$
where $y=\exp(Y)$. For $Y$ in a compact set, we can replace $W$ by its conjugate by
$y$ without loss of generality, and thus we may as well
consider the $s$-derivative of
$$\chi_t(y\exp(sW)a_tz_0).$$
We rewrite this as
$$\chi_t(ya_t\exp(s\Ad(a_t)^{-1}W)z_0)$$
and apply the projection along $\hf$. It follows that
the derivative can be rewritten as
$$d/ds|_{s=0} \,\chi_t(ya_t
\exp(s\pi_\vf\Ad(a_t)^{-1}W)z_0)$$
and then finally also as
$$
d/ds|_{s=0} \,\chi_t(y\exp(s\Ad(a_t)\pi_\vf\Ad(a_t)^{-1}W)a_tz_0).$$
Note that $\Ad(a_t)\pi_\vf\Ad(a_t)^{-1}W\in\vf$.
We conclude that the derivative is a linear combination
of derivatives of $\psi$ on $V$,
with coefficients that are smooth functions on $V$.
Furthermore, it follows from Lemma \ref{M_t}
that the coefficients are bounded for $t\leq 0$. 
As before we conclude
$L(W)\chi_t\in L^p(Z)$ for all $t\leq 0$,
with exponentially decaying $p$-norms. It follows
that $L(W)\chi\in L^p(Z)$. 

By repeating the argument
for higher derivatives we finally see that $\chi\in L^p(Z)^\infty$.
This concludes the proof of Proposition \ref{non-red}.
\end{proof}

\begin{rmk} It follows from the proof of the proposition
that 
\begin{equation}\label{limit zero} 
\lim_{t\to -\infty} \mathbf{v}_B(a_t\cdot z_0) =0\, .\end{equation}
In fact if we apply the invariant Sobolev Lemma \ref{Blemma} to the function 
$\chi$ with $p=1$ we get 
$$n \leq \chi(a_{-n}\cdot z_0) \leq C_B v_B(a_{-n}\cdot z_0)^{-1} 
\|\chi\|_{1, 2\dim G} \ \quad (n\in \N)\, .$$
Thus, for a constant $C>0$,  
$$\mathbf{v}_B(a_{-n}\cdot z_0) \leq \frac Cn \qquad (n\in \N)\, .$$
The assertion (\ref{limit zero}) follows from the fact that 
the equivalence class of $v_B$ is independent of the choice of the ball
$B$ and that $a_{t} a_{[t]}^{-1}\in B'$ for all $t\in \R$ and a certain ball 
$B'$. 
\end{rmk}

The following general result was used above.

\begin{lemma} \label{grad-lemma} Let $\uf=\bigoplus_{j>0} \uf^j$ be a positively graded nilpotent 
Lie algebra and $\hf<\uf$ a subalgebra. Let $\uf_0\subset \uf$
be a graded vector complement to $\hf$ which is constructed as follows:
If $\hf=\uf$, then $\uf_0=\{0\}$. Otherwise we choose  a vector, say $Y_1$, in $\uf^{j_1}\setminus \hf$,
which  largest possible $j_1$. If $\hf+\R Y_1 \subsetneq \uf$
we choose  $Y_2\in \uf^{j_2}\setminus (\hf+\R Y_1)$
with largest possible $j_2$. We 
continue this procedure until $Y_1,Y_2,\dots$
span a complementary subspace. This subspace we denote $\uf_0$. 

Let $U$ be a simply connected Lie group with Lie algebra $\uf$ and $H<U$ the connected subgroup associated to $\hf$.
Then the map 
$$ \uf_0 \to U/H, \ \ X\mapsto \exp(X)H$$
is a diffeomorphism. 
\end{lemma}

\begin{proof} By induction on $\dim \uf$. The one-dimensional case is trivial.
Let $0\neq Y$ be an element in $\uf$ of top degree and 
note that $Y$ is central. If $Y_1\in\uf^{\operatorname{top}}$ we choose $Y=Y_1$. 
Otherwise $\uf^{\operatorname{top}}\subset\hf$,
and we choose $Y$ arbitrarily. We consider the graded Lie algebra $\tilde \uf :=\uf/ \R Y$ 
and the subalgebra $\tilde\hf=\hf/\R Y$. The assertion now follows easily by 
applying the induction hypothesis to this pair. 
Note that if $Y=Y_1$ then
$$\exp(t_1Y_1+\dots+ t_m Y_m)=\exp(t_1Y_1)\exp(t_2Y_2+\dots+ t_m Y_m)$$
since $Y$ is central.
\end{proof}

\subsection{Final remarks}

\par {\bf 1.} 
We did not address here the case where 
$G$ is not reductive. One might expect in general for $G/H$ unimodular and algebraic 
that $Z$ has VAI if and only if the
nilradical of $H$ is contained in the nilradical of~$G$.

\par {\bf 2.} 
The following may be an alternative approach to
Theorem \ref{th=1}.
To be more specific, assume $Z=G/H$ to be unimodular,
algebraic and quasi-affine. 
Under these assumptions we expect that there
is a rational $G$-module $V$, and an embedding $Z\to V$ such that
the invariant measure $\mu_Z$, via pull-back, defines a tempered
distribution on $V$. Note that if $Z$ is of reductive type, then
there exists a $V$ such that the image of $Z\to V$ is closed, and hence
$\mu_Z$ defines a tempered distribution on $V$. If $Z$ is not of reductive
type, then by Matsushima's criterion (\cite{BH-C}, Thm.~3.5)
all images $Z\to V$ are non-closed 
and the expected embedding would imply that VAI does not hold.
This is supported by a result in \cite{RR},
which asserts that
for a reductive group $G$ and $X\in \gf:=\mathrm{Lie}(G)$ the invariant measure
on the adjoint orbit $Z:=\Ad(G)(X)\subset \gf$ defines a tempered distribution
on $\gf$.
Various  particular results in the theory of prehomogeneous vector spaces
provide additional support (see~\cite{BR}).

\end{document}